\title[The Bonnet theorem for statistical manifolds]
{The Bonnet theorem for statistical manifolds}
\author{TAIJI MARUGAME}
\date{}
\newcommand\R{\mathbb{R}}
\renewcommand\a{\alpha}
\renewcommand\d{\delta}
\newcommand\pa{\partial}
\newtheorem{lem}{Lemma}[section]
\newtheorem{theorem}[lem]{Theorem}
\newtheorem{prop}[lem]{Proposition}
\theoremstyle{definition}
\newtheorem{dfn}[lem]{Definition}
\numberwithin{equation}{section}
\address{Mathematical Analysis Team, RIKEN Center for Advanced Intelligence Project (AIP), 1-4-1 Nihonbashi, Chuo-ku, Tokyo 103-0027, Japan}
\address{Department of Mathematics, Graduate School of Science, Osaka University, 1-1 Machikaneyama-cho Toyonaka Osaka 560-0043, Japan}
\email{taiji.marugame@riken.jp}
\keywords{statistical manifolds; Hessian manifolds; the Gauss--Codazzi--Ricci equations; the Bonnet theorem} 
\subjclass[2010]{Primary~53B12, Secondary~53B25}
\begin{document}

\begin{abstract} 
We prove the Bonnet theorem for statistical manifolds, which states that if a statistical manifold admits tensors satisfying the Gauss--Codazzi--Ricci equations, then it is locally embeddable to a flat statistical manifold (or a Hessian manifold). The proof is based on the notion of statistical embedding to the product of a vector space and its dual space introduced by Lauritzen. As another application of Lauritzen's embedding, we show that a statistical manifold admitting an affine embedding of codimension 1 or 2 is locally embeddable to a flat statistical manifold of the same codimension.
\end{abstract}
\maketitle

\section{Introduction}
A statistical manifold $(M, g, \nabla)$ is a $C^\infty$-manifold endowed with a Riemannian metric $g$ and a torsion-free affine connection $\nabla$ such that $\nabla g$ is totally symmetric. In information geometry, statistical structures naturally appear as the 
geometric structure on the space of probability distributions and play an important role 
in the geometric treatment of statistical problems such as statistical inference (see \cite{Amari, AN, AJLS}). 

From the viewpoint of differential geometry, statistical manifolds can be considered as generalization of Riemannian manifolds since a Riemannian manifold with the Levi-Civita connection $(M, g, \nabla^g)$ is an example of a statistical manifold. However, there arises a crucial difference when we consider the notion of ``flatness''. A statistical manifold $(M, g, \nabla)$ is called flat when $\nabla$ is a flat connection; we do not assume that $g$ is a flat metric. Then, the symmetry property of $\nabla g$ imposes that $g$ is locally the Hessian of a function: $g=\nabla^2 \psi$. Thus, a flat statistical manifold is equivalent to a so-called Hessian manifold, and its local structure is not unique unlike the flat models in Riemannian or other Cartan geometries.

In Riemannian geometry, a fundamental problem is the isometric embedding problem of  Riemannian manifolds into the Euclidean spaces, and one can pose the corresponding problem for statistical manifolds. H. V. L\^e \cite{Le} solved this problem and proved that any statistical manifold can be locally embedded to a flat statistical manifold of sufficiently large dimension; see also \cite{AJLS}. This is a statistical analogue of the Nash embedding theorem in Riemannian geometry. 

In this paper, we consider a statistical analogue of the Bonnet theorem, which is a fundamental theorem in the submanifold theory in Riemannian geometry. A submanifold in the Euclidean space inherits the induced metric (the first fundamental form) and an extrinsic curvature called the second fundamental form. These quantities satisfy algebraic and differential equations called the Gauss--Codazzi equations. The classical Bonnet theorem asserts that conversely if a Riemannian manifold admits tensors with the Gauss--Codazzi equations, then it is locally embedded isometrically to the Euclidean space. For a submanifold of a flat statistical manifold, we can also define the induced statistical structure $(g, \nabla)$ and extrinsic curvature quantities $h^a_{ij}, h^*_{aij}, \tau^a_{bi}$ with the Gauss--Codazzi--Ricci equations; see \eqref{GCR}. The goal of this paper is to prove the following Bonnet theorem for statistical manifolds:

\begin{theorem}\label{Bonnet}
Let $(M, g, \nabla)$ be an $n$-dimensional statistical manifold and $r$ a positive integer. If there exist tensors 
\[
(h^a_{ij})\in\Gamma(S^2T^*M\otimes\underline\R^r), \quad 
(h^*_{aij})\in\Gamma(S^2T^*M\otimes(\underline\R^r)^*), \quad
(\tau^a_{bi})\in\Gamma(T^*M\otimes\underline\R^r\otimes(\underline\R^r)^*)
\]
which satisfy the Gauss--Codazzi--Ricci equations \eqref{GCR}, then $M$ is locally embeddable to a flat statistical manifold of dimension $n+r$.
\end{theorem}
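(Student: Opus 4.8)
The plan is to follow the moving-frame strategy of the classical Bonnet theorem, adapted to the statistical setting through Lauritzen's embedding into $\R^{n+r}\times(\R^{n+r})^*$. The ambient flat statistical manifold of dimension $n+r$ carries a pair of mutually dual flat connections, and its restriction to a would-be submanifold $M$ splits the rank-$(n+r)$ bundle as $TM\oplus\underline\R^r$. Accordingly, I would first use the given data to manufacture this ambient structure intrinsically: define a connection $D$ on $E:=TM\oplus\underline\R^r$ whose block form combines $\nabla$ on $TM$, the second fundamental form $h^a_{ij}$ as the $TM\to\underline\R^r$ component, the shape operator obtained by raising an index of $h^*$ (i.e.\ $g^{ik}h^*_{akj}$) as the $\underline\R^r\to TM$ component, and the normal connection built from $\tau^a_{bi}$ on $\underline\R^r$. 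I would then define a second connection $D^*$ by the same recipe with the roles of $h$ and $h^*$ interchanged and the normal part transposed, so that $D$ and $D^*$ are dual with respect to the fiber metric on $E$ extending $g$.

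The key point---and the reason the hypotheses are organized as \eqref{GCR}---is that the Gauss--Codazzi--Ricci equations are exactly the flatness conditions for $D$. I would verify this block by block: the Gauss equation identifies the $TM\otimes TM$ component of the curvature $R^D$ with the vanishing combination of the curvature of $\nabla$ and the quadratic terms in $h,h^*$; the Codazzi equations kill the mixed $TM\otimes\underline\R^r$ components; and the Ricci equation kills the $\underline\R^r\otimes\underline\R^r$ component. Since the intended target is flat, all of these must vanish, so $R^D=0$ and, dually, $R^{D^*}=0$.

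With $D$ flat on a contractible coordinate neighborhood, there is a $D$-parallel frame $\{E_A\}_{A=1}^{n+r}$ of $E$. I would then integrate the tautological data: writing the canonical $E$-valued $1$-form (the inclusion $TM\hookrightarrow E$) in this parallel frame yields $\R^{n+r}$-valued $1$-forms that are closed---closedness following from the torsion-freeness of $\nabla$ together with the Codazzi symmetry---and hence, on a simply connected patch, exact. Integrating produces a map $p\colon M\to\R^{n+r}$ realizing $\nabla$ as the induced affine connection. Repeating the argument with $D^*$ yields $q\colon M\to(\R^{n+r})^*$. Assembling $f=(p,q)$ and appealing to Lauritzen's embedding, I would conclude that $f$ is a statistical embedding into a flat statistical manifold of dimension $n+r$ inducing exactly $(g,\nabla)$, with second fundamental forms $h,h^*$ and normal connection $\tau$.

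The main obstacle I anticipate is this last step: matching the two integrations $p$ and $q$ so that they are compatible as the two dual affine coordinate systems of a single Hessian structure, rather than merely producing two unrelated flat immersions. Concretely, one must check that the pairing $\pa_i p\cdot\pa_j q$ reproduces $g_{ij}$ and that $q$ is the $\nabla^*$-affine image matching $p$'s $\nabla$-affine image; this reconciliation is where the metric compatibility of $D$ and $D^*$ and the precise form of Lauritzen's embedding do the real work.
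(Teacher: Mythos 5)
Your proposal follows essentially the same route as the paper: build the dual pair of connections on $E=TM\oplus\underline\R^r$ from $(\nabla,h,h^*,\tau)$ via the block formulas, observe that the Gauss--Codazzi--Ricci equations are exactly the flatness of these connections, integrate the tautological $TM\hookrightarrow E$ form in a parallel frame for each connection to get $f\colon M\to\R^{n+r}$ and $\varphi\colon M\to(\R^{n+r})^*$, and invoke the equivalence with Lauritzen embeddings. The ``main obstacle'' you flag at the end is resolved exactly as you suspect: since $\boldsymbol{\nabla}$ and $\boldsymbol{\nabla}^*$ are dual with respect to the fiber metric $G$ on $E$, the pairing $G(e_A,e^*_B)$ of the two parallel frames is constant, so the frames can be normalized to satisfy $G(e_A,e^*_B)=\delta_{AB}$; with that normalization $\langle f_*X,\varphi_*Y\rangle=G\bigl((X,0),(Y,0)\bigr)=g(X,Y)$ and $\langle X\cdot f_*Y,\varphi_*Z\rangle=G\bigl(\boldsymbol{\nabla}_X(Y,0),(Z,0)\bigr)=g(\nabla_XY,Z)$ follow directly, which is all that Lauritzen's conditions require.
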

As noted above, the local structure of flat statistical manifolds is not unique and this prevents us from applying straightforwardly the standard technique of proving Bonnet-type theorems. We overcome this difficulty by invoking the notion of statistical embedding 
to the product of mutually dual vector space $V\times V^*$, introduced by Lauritzen \cite{Lau}. A key step in the proof of Theorem \ref{Bonnet} is to establish that the embeddability to $V\times V^*$ in the sense of Lauritzen is in fact locally equivalent to the embeddability to a flat statistical manifold (Theorem \ref{equivalence}).
This equivalence has an immediate application to the embeddability of statistical structures $(g, \nabla^{(\a)})\ (\a\in\R)$; see Proposition \ref{alpha}.

We also have an application of Theorem \ref{equivalence} to affine embeddings of statistical manifolds. There have been studies of statistical manifolds from the viewpoint of affine differential geometry (\cite{NS}). There is a notion of embedding of statistical manifolds to affine spaces of codimension $1, 2$ and the affine embeddability is closely related to the generalized conformal structures on statistical manifolds (\cite{DNV, Kurose, Matsuzoe}). However, in the affine embedding, the metric $g$ is defined by the second fundamental form instead of the induced metric, so its relation to the usual statistical embedding is not clear at once. In this paper, we prove 
that the affine embeddability implies the embeddablity to a flat statistical manifold:

\begin{theorem}\label{affine}
Let $r=1, 2$. If an $n$-dimensional statistical manifold $(M, g, \nabla)$ admits a local affine embedding to $\R^{n+r}$, then it is locally embeddable to a flat statistical manifold of dimension $n+r$.
\end{theorem}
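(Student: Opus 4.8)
The plan is to reduce the statement to the construction of a Lauritzen embedding and then to invoke Theorem~\ref{equivalence}. Recall that to give a Lauritzen embedding of $(M,g,\nabla)$ into $V\times V^*$ amounts to giving a pair of affine immersions $\phi_1\colon M\to V$ and $\phi_2\colon M\to V^*$ such that $\phi_1$ induces $\nabla$, $\phi_2$ induces the dual connection $\nabla^*$, and $g(X,Y)=\langle d\phi_1(X),\,d\phi_2(Y)\rangle$ after a fixed sign normalization. By Theorem~\ref{equivalence}, producing such a pair with $V=\R^{n+r}$ yields a local embedding of $M$ into a flat statistical manifold of dimension $n+r$, which is exactly what we want. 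The given affine embedding $f\colon M\to\R^{n+r}$ is already an affine immersion into $V:=\R^{n+r}$ inducing $\nabla$, so I will set $\phi_1=f$; note that going through $V\times V^*$ is what lets us ignore the fact that the affine transversal bundle of $f$ need not be orthogonal for any metric. The entire problem is thereby shifted to constructing the dual component $\phi_2\colon M\to V^*$ from the transversal data of $f$ and verifying that it induces $\nabla^*$ and pairs with $f$ to reproduce $g$.

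For $r=1$ this is classical. Writing $\xi$ for the transversal field, so that $D_Xf_*Y=f_*(\nabla_XY)+h(X,Y)\xi$ with $g=h$, I would take $\phi_2$ to be the conormal map $\nu\colon M\to V^*$ determined by $\langle\nu,f_*X\rangle=0$ and $\langle\nu,\xi\rangle=1$. Differentiating $\langle\nu,f_*Y\rangle\equiv0$ gives $\langle d\nu(X),f_*Y\rangle=-h(X,Y)=-g(X,Y)$, which is symmetric, so the pairing recovers $g$ up to sign; and it is standard that $\nu$ is itself an affine immersion inducing the conjugate connection $\nabla^*$. Hence $(f,\nu)$ is a Lauritzen embedding and the case $r=1$ is complete.

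For $r=2$ the obstruction is that the conormal space of $f$ at each point is $2$-dimensional, so there is no distinguished conormal covector, and a conormal section would only reproduce metrics lying pointwise in the span of the two second fundamental forms, which $g$ need not. I would therefore build $\phi_2$ as a genuine dual affine immersion rather than a conormal section: using the transversal frame $\xi_1,\xi_2$, the second fundamental forms $h^a$, the shape operators, and the transversal connection forms $\tau^b_a$ of the Weingarten formula, one sets up a first-order system for $\phi_2$ (equivalently for the $V^*$-valued $1$-form $d\phi_2$) encoding $\langle d\phi_2(X),f_*Y\rangle=-g(X,Y)$ together with the normalization forcing $\phi_2$ to be an immersion inducing $\nabla^*$. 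Solving this system locally is where I expect the main technical work to lie, and I expect the restriction $r\le2$ to enter precisely through the consistency and integrability of this system.

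In both cases it then remains to confirm the two compatibility conditions that make $(\phi_1,\phi_2)$ a bona fide Lauritzen embedding: that the pairing $\langle d\phi_1(X),d\phi_2(Y)\rangle$ is symmetric and nondegenerate and equals $g$, and that the flat connection on $V^*$ induces $\nabla^*$ through $\phi_2$. Once these are checked, Theorem~\ref{equivalence} applies and delivers the desired local embedding of $(M,g,\nabla)$ into a flat statistical manifold of dimension $n+r$, finishing the proof.
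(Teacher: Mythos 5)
Your $r=1$ argument is essentially the paper's proof: take the conormal map $\nu$ of the equiaffine immersion, check that the pairing of $df$ with $d\nu$ reproduces $g$ (up to the sign convention, which is resolved by the paper's choice $D_Xf_*Y=f_*(\nabla_XY)-g(X,Y)\xi$) and that the second-derivative pairing reproduces $\nabla$, then invoke Theorem~\ref{equivalence}. The one point you gloss over is where equiaffinity ($\tau=0$) enters: it is exactly what gives $\langle\xi,\nu_*Z\rangle=0$, without which the term $g(X,Y)\langle\xi,\nu_*Z\rangle$ would contaminate the identity $\langle X\cdot f_*Y,\nu_*Z\rangle=g(\nabla_XY,Z)$. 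That is a presentational omission, not a gap.

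The $r=2$ case, however, has a genuine gap. You assert that ``there is no distinguished conormal covector'' and propose to construct $\phi_2$ by solving an unspecified first-order system, conceding that this is ``where the main technical work'' lies --- but you never set up the system, never verify its integrability, and never explain why $r\le 2$ would make it solvable. So the case $r=2$ is a plan, not a proof. The missing idea is the definition of a codimension-$2$ affine embedding of a statistical manifold (Matsuzoe, Nomizu--Sasaki): the position vector field $\eta$ is taken as one of the two transversal fields, with $D_X\eta=X$, and $g$ is by definition the second fundamental form in the $\xi$-direction of the decomposition $D_Xf_*Y=f_*(\nabla_XY)-g(X,Y)\xi-k(X,Y)\eta$. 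This structure does single out a canonical conormal covector, namely the $\varphi$ with $\varphi|_{f_*TM\oplus\R\eta}=0$ and $\langle\xi,\varphi\rangle=1$; the relations $\langle\xi,\varphi_*Z\rangle=0$ (from $\tau=0$) and $\langle\eta,\varphi_*Z\rangle=0$ (from $D_Z\eta=Z$) then make the computation identical to the codimension-$1$ case, with no integration required. Your worry that a conormal section ``would only reproduce metrics lying in the span of the two second fundamental forms'' dissolves once one uses the actual definition, in which $g$ is one of those forms. Without this input the proof of the $r=2$ case is not complete.
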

The proof is given by the application of Theorem \ref{equivalence} to the pair of the affine embedding and its dual (conormal) embedding. 
\bigskip

This paper is organized as follows: In \S \ref{statistical}, we review some notions on statistical manifolds and prove the equivalence (Theorem \ref{equivalence}) of the usual embeddability to a flat statistical manifold and that of Lauritzen. Then, we apply this fact to affine embeddings and prove Theorem \ref{affine}. In \S\ref{Bonnet-theorem}, we define extrinsic curvature quantities on submanifolds in a flat statistical manifold and derive the Gauss--Codazzi--Ricci equations. Finally, we prove the Bonnet theorem (Theorem \ref{Bonnet}) based on Theorem \ref{equivalence}.

{\it Notations}: We adopt Einstein's summation convention. The indices $i, j, k, l$ are used for tensors on the (sub)manifold $M$ and run from $1$ to $n$. The indices $A, B, K, L$ are used for tensors on the ambient space $\mathcal{S}$ or $V$ and run from $1$ to $N(=n+r)$. The indices $a, b, c$ are used for complementary directions to $M$ and run from $1$ to $r (=N-n)$. The skew symmetrization of indices are denoted by $[\ ,\ ]$ and performed only over indices such as $i, j, k$. For example,  
\[
h^a_{l[i}h^*_{bj]}{}^l\quad  {\rm means}\quad \frac{1}{2}(h^a_{li}h^*_{bj}{}^l-h^a_{lj}h^*_{bi}{}^l).
\]

\bigskip\noindent {\bf Acknowledgment} The author is grateful to Professor Hiroshi Matsuzoe for comments and information on related literatures.
This work was partially supported by JSPS KAKENHI Grant Number JP20K22318.


\section{Statistical manifolds}\label{statistical}
\subsection{Statistical structures}
We will briefly review some basic notions on statistical manifolds; we refer the reader to the books \cite{AN, AJLS} for details. 

Let $M$ be an $n$-dimensional $C^\infty$-manifold. We call $(g, \nabla)$ a {\it statistical structure} if $g$ is a Riemannian metric and $\nabla$ is a torsion-free affine connection on $M$ such that $\nabla g$ is a totally symmetric $3$-tensor: $\nabla_{[i}g_{j]k}=0$. We lower and raise the indices by the metric $g_{ij}$ and its inverse $g^{ij}$. The {\it dual connection} $\nabla^*$ of $\nabla$ is the affine connection characterized by the equation
\[
X\cdot g(Y, Z)=g(\nabla_X Y, Z)+g(Y, \nabla^*_X Z), \quad X, Y, Z\in\Gamma(TM).
\]
The condition that $\nabla g$ is totally symmetric is equivalent to saying that $\nabla^*$ is also torsion-free. Thus, $(g, \nabla^*)$ also becomes a statistical structure, which is called the {\it dual statistical structure}. 
Since $\nabla$ and $\nabla^*$ are torsion-free, their curvature tensors are given by the Ricci identities
\[
(\nabla_i\nabla_j-\nabla_j\nabla_i)X^k=R_{ij}{}^k{}_l X^l, \quad 
(\nabla^*_i\nabla^*_j-\nabla^*_j\nabla^*_i)X^k=R^*_{ij}{}^k{}_l X^l.
\]
It is easy to show that $R$ and $R^*$ are related by
\begin{equation}\label{R-relation}
R_{ijkl}=-R^*_{ijlk}.
\end{equation}

We also have a 1-parameter family of statistical structure $(g, \nabla^{(\a)}), \a\in\R$, where the $\a$-{\it connection} $\nabla^{(\a)}$ is defined by
\[
\nabla^{(\a)}:=\frac{1+\a}{2}\nabla+\frac{1-\a}{2}\nabla^*.
\]
The dual connection of $\nabla^{(\a)}$ is equal to $\nabla^{(-\a)}$ and since both connections are torsion-free, $(g, \nabla^{(\a)})$ is indeed a statistical structure.
\subsection{Flat statistical structures}
A statistical structure $(g, \nabla)$ is said to be {\it flat} when $\nabla$ is a flat connection: $R_{ij}{}^k{}_l=0$. In this case, the dual connection is also flat by the equation \eqref{R-relation}, so we also call $(g, \nabla, \nabla^*)$ a {\it dually flat structure}. Although the metric $g$ may not be flat, the condition $\nabla_{[i}g_{j]k}=0$ and the Poincar\'e lemma imply that locally one can write as
\[
g_{ij}=\nabla_i\nabla_j \psi
\]
with some function $\psi$. Conversely, if $\nabla$ is flat and $g$ is locally written as the Hessian of a function, then $(g, \nabla)$ becomes a flat statistical structure. Thus, the flat statistical structure is equivalent to the {\it Hessian structure}.

Let $(\xi^1, \dots, \xi^n)$ be local affine coordinates for $\nabla$, and $\psi$ a Hesse potential of $g$:
\[
\nabla\frac{\pa}{\pa \xi^i}=0, \quad g_{ij}=\frac{\pa^2\psi}{\pa\xi^i\pa \xi^j}. 
\]
The {\it Legendre transform} of $(\xi^i, \psi)$ is defined by
\[
\eta_i:=\frac{\pa\psi}{\pa\xi^i}, \quad \psi^*:=\xi^i\eta_i-\psi.
\]
Then, $(\eta_1, \dots, \eta_n)$ become affine coordinates for $\nabla^*$, called the {\it dual coordinates}, and $\psi$ gives a Hesse potential of $g$ with respect to $\nabla^*$. Moreover, we have
\[
\frac{\pa \psi^*}{\pa\eta_i}=\xi^i, \quad \Bigl(\frac{\pa^2 \psi^*}{\pa\eta_i\pa\eta_j}\Bigr)=\Bigl(\frac{\pa \xi^i}{\pa \eta_j}\Bigr)=\Bigl(\frac{\pa^2\psi}{\pa\xi^i\pa \xi^j}\Bigr)^{-1}.
\]
In terms of $\xi^i$ and $\eta_i$, the metric $g$ has the expression
\begin{equation*}
g=\frac{\pa^2\psi}{\pa\xi^i\pa \xi^j}d\xi^i\otimes d\xi^j=\frac{\pa^2 \psi^*}{\pa\eta_i\pa\eta_j}d\eta_i\otimes d\eta_j=d\xi^i\otimes d\eta_i.
\end{equation*}
\subsection{Statistical submanifolds and statistical embeddings}
Let $(\mathcal{S}, G, D)$ be a statistical manifold and $M\subset \mathcal{S}$ a submanifold.
Then, we can define a statistical structure $(g, \nabla)$ on $M$ by setting
\[
g(X, Y)=G(X, Y), \quad g(\nabla_X Y, Z)=G(D_X Y, Z)
\]
for $X, Y, Z\in \Gamma(TM)$. In other words, the metric $g$ is defined by the restriction of $G$ and the connection $\nabla$ is the ``orthogonal projection'' of $D$ with respect to $G$. One can see that $\nabla$ is torsion-free and the dual connection $\nabla^*$ is given by $g(\nabla^*_X Y, Z)=G(D^*_X Y, Z)$. Thus, $\nabla^*$ is also torsion-free, so $(g, \nabla)$ is a statistical structure.

Let $(M, g, \nabla)$, $(\mathcal{S}, G, D)$ be two statistical manifolds, and $f\colon M\longrightarrow \mathcal{S}$ an embedding. We say $f$ is a {\it statistical embedding} if $(g, \nabla)$ agrees with the statistical structure induced on $f(M)\subset\mathcal{S}$ when we identify $M$ with its image $f(M)$. In this paper, we only consider local embeddings to flat statistical manifolds. When we say $(M, g, \nabla)$ is {\it locally embeddable to a flat statistical manifold}, we mean that for any point $x\in M$, there exist an open neighborhood $U$ of $x$, a flat statistical manifold $(\mathcal{S}, G, D)$, and a statistical embedding $f\colon U\longrightarrow \mathcal{S}$. 
\subsection{Lauritzen's statistical embedding}
Lauritzen \cite{Lau} introduced another notion of statistical embedding whose target space is the product of two mutually dual vector spaces instead of a flat statistical manifold (see also \cite{AN}):
\begin{dfn}Let $(M, g, \nabla)$ be a statistical manifold, and $V$ a real vector space. 
A pair of embeddings
\[
(f, \varphi)\colon M\longrightarrow V\times V^*
\]
is called a {\it statistical embedding (in the sense of Lauritzen)} if 
\begin{equation}\label{Lauritzen}
g(X, Y)=\langle f_*X, \varphi_* Y\rangle, \quad g(\nabla_X Y, Z)=\langle X\cdot f_*Y, \varphi_* Z\rangle
\end{equation}
hold for any $X, Y, Z\in\Gamma(TM)$, where $\langle\ ,\ \rangle$ denotes the natural paring of $V$ and $V^*$.
\end{dfn}
In the equation \eqref{Lauritzen}, we are identifying each tangent space $T_\xi V$ or $T_\eta V^*$ with 
$V$ or $V^*$, so $f_* X$ and $\varphi_* Y$ are regarded as $V$-valued or $V^*$-valued functions on $M$. In the index notation, we can express \eqref{Lauritzen} as
\[
g_{ij}=(\pa_i f^A)(\pa_j \varphi_A), \quad \Gamma_{ijk}\,(=g_{kl}\Gamma_{ij}{}^l)=(\pa_i\pa_j f^A)(\pa_k\varphi_A).
\]
We note that the mapping
\[
(\varphi, f)\colon M\longrightarrow V^*\times V
\]
gives a statistical embedding of the dual statistical manifold $(M, g, \nabla^*)$.

The following theorem asserts that the two notions of statistical embedding are locally equivalent: 

\begin{theorem}\label{equivalence}
Let $(M, g, \nabla)$ be an $n$-dimensional statistical manifold. Then the following are equivalent: 
\begin{itemize}
\item[(i)] $M$ is locally embeddable to an $N$-dimensional flat statistical manifold. \\
\item[(ii)] $M$ is locally embeddable to $V\times V^*$ in the sense of Lauritzen for an $N$-dimensional vector space $V$. 
\end{itemize}
\end{theorem}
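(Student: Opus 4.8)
The plan is to prove the two implications separately. Passing from a flat statistical manifold to a Lauritzen pair is essentially a change of coordinates, whereas the reverse passage requires constructing a suitable Hesse potential by hand; the latter is the substantial direction.

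For (i) $\Rightarrow$ (ii), suppose $F\colon U\to(\mathcal{S},G,D)$ is a statistical embedding into a flat statistical manifold. I would fix local affine coordinates $\xi^A$ for $D$ together with their Legendre-dual coordinates $\eta_A$, so that $G=d\xi^A\otimes d\eta_A$, and set $f:=\xi\circ F$ and $\varphi:=\eta\circ F$, giving maps $M\to V:=\R^N$ and $M\to V^*$. Using $\pa_i f^A=\pa_i(F^*\xi^A)$, $\pa_i\varphi_A=\pa_i(F^*\eta_A)$ and the expression $G=d\xi^A\otimes d\eta_A$, a direct computation gives $g_{ij}=(\pa_i f^A)(\pa_j\varphi_A)$; and since $D$ has vanishing Christoffel symbols in the $\xi$-coordinates, so that $D_{\pa_i}(F_*\pa_j)=(\pa_i\pa_j f^A)\pa_{\xi^A}$, one gets $\Gamma_{ijk}=(\pa_i\pa_j f^A)(\pa_k\varphi_A)$. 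Hence $(f,\varphi)$ is a Lauritzen embedding into the $N$-dimensional $V\times V^*$.

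For the converse (ii) $\Rightarrow$ (i), the key observation is that realizing $M$ inside a flat statistical manifold on $V=\R^N$ amounts to producing a convex Hesse potential $\Psi$ on a neighborhood of $f(M)$ in $V$ whose gradient along $f(M)$ equals $\varphi$. Indeed, if $(\pa_A\Psi)\circ f=\varphi_A$ on $M$ with $\pa_A\pa_B\Psi$ positive definite, then $(V,\,G=\pa_A\pa_B\Psi\,d\xi^A\otimes d\xi^B,\,D)$ with $D$ the standard flat connection is a flat statistical manifold, and differentiating $\varphi_A=(\pa_A\Psi)\circ f$ gives $\pa_j\varphi_A=(\pa_j f^B)\,\pa_A\pa_B\Psi$; substituting this into the Lauritzen identities \eqref{Lauritzen} turns them into exactly the statements $g_{ij}=G(f_*\pa_i,f_*\pa_j)$ and $\Gamma_{ijk}=G(D_{\pa_i}(f_*\pa_j),f_*\pa_k)$, i.e. that $f$ is a statistical embedding into $(V,G,D)$. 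So it remains to construct $\Psi$. First I would note that the symmetry $g_{ij}=g_{ji}$ reads $(\pa_i f^A)(\pa_j\varphi_A)=(\pa_j f^A)(\pa_i\varphi_A)$, which says precisely that the pulled-back $1$-form $\varphi_A\,df^A$ on $M$ is closed; by the Poincaré lemma there is locally a function $\psi$ on $M$ with $\pa_i\psi=(\pa_i f^A)\varphi_A$. This $\psi$ is the prescribed value of $\Psi$ along $f(M)$, and its tangential derivative is automatically compatible with the prescribed gradient $\varphi$, so the $0$-jet and $1$-jet to be extended are consistent.

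The remaining and main step is to produce $\Psi$ with this prescribed $1$-jet along $f(M)$ and positive-definite Hessian. I would work in adapted affine coordinates: after an affine change on $V$, arrange $f(x_0)=0$ and $T_0 f(M)=\R^n\times\{0\}$, so that near $x_0$ the image is a graph $\xi''=\beta(\xi')$ with $\beta(0)=0$, $d\beta(0)=0$, where $\xi=(\xi',\xi'')\in\R^n\times\R^r$. Writing $w^a:=\xi^{n+a}-\beta^a(\xi')$ for the transverse deviation, I would set
\[
\Psi(\xi):=\psi(\xi')+\sum_{a=1}^r\varphi_{n+a}(\xi')\,w^a+\frac{\lambda}{2}\sum_{a=1}^r (w^a)^2,
\]
with a constant $\lambda>0$ to be fixed. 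A direct check on $\{w=0\}=f(M)$ shows $\pa_A\Psi=\varphi_A$ for every $A$: the normal components come from the term $\lambda w$, while the tangential components $\varphi_i$ are recovered from the defining relation $\pa_i\psi=(\pa_i f^A)\varphi_A$. Evaluating the Hessian at $0$, the transverse term contributes $\lambda I_r$ in the normal-normal block, the tangential-tangential block equals $g_{ij}(0)$, and the mixed block is finite; since $g(0)$ is positive definite, a Schur-complement argument shows that $\pa_A\pa_B\Psi(0)$, hence $\pa_A\pa_B\Psi$ on a neighborhood $\Omega$ of $0$, is positive definite once $\lambda$ is large enough. This gives the desired flat statistical manifold on $\Omega$ and completes (ii) $\Rightarrow$ (i). I expect the crux to be exactly this construction — matching the prescribed gradient while forcing positive-definiteness of the extended Hessian — because the flat statistical structure, unlike its Riemannian counterpart, is not rigid and must be built explicitly; it is the freedom in the normal-normal second derivatives (here encoded by $\lambda$) that makes this possible.
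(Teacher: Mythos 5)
Your proof is correct and follows essentially the same route as the paper: (i)$\Rightarrow$(ii) via affine coordinates and their Legendre duals, and (ii)$\Rightarrow$(i) by integrating the closed form $\varphi_A\,df^A$ to get the restriction $\psi$ of the potential, extending its $1$-jet off $f(M)$ so that the gradient matches $\varphi$, and adding a large multiple of the squared defining functions of $f(M)$ (your $\tfrac{\lambda}{2}\sum_a (w^a)^2$ is exactly the paper's $C\sum_a\rho_a^2$) to force positive-definiteness of the Hessian. Your graph-coordinate formula is just a concrete instance of the paper's abstract jet extension, and your verification that the extended potential reproduces $g$ and $\Gamma_{ijk}$ matches the paper's computation.
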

\begin{proof}
First we prove (i)$\Rightarrow$(ii). Let $(\mathcal{S}, G, D)$ be an $N$-dimensional flat statistical manifold, and suppose $F\colon M\longrightarrow \mathcal{S}$ is a statistical embedding defined on a neighborhood of a point.  (For simplicity, we also denote the neighborhood by $M$.) We take local affine coordinates $(\xi^A)$ for $D$ and a Hesse potential $\psi$ for $G$, and denote the dual coordinates by $(\eta_A)$:  
\[
G=\frac{\partial^2 \psi}{\partial \xi^A\partial \xi^B}d\xi^A \otimes d\xi^B, \quad 
\eta_A=\frac{\partial \psi}{\partial \xi^A}.
\]
Let $V:=\mathbb{R}^N$ and define $(f, \varphi)\colon M\longrightarrow V\times V^*$ by $f^A:=\xi^A\circ F$ and $\varphi_A:=\eta_A\circ F$. Then, since $G=d\xi^A\otimes d\eta_A$ we have $g=F^* G=df^A\otimes d\varphi_A$, i.e., $g_{ij}=(\partial_i f^A)(\partial_j\varphi_A)$. Moreover, we have 
\begin{align*}
g(\nabla_{\pa_i}\pa_j, \pa_k)&=G(D_{F_*\pa_i}(F_*\pa_j), F_*\pa_k) \\
&=G\Bigl((\pa_i\pa_j f^A)\frac{\pa}{\pa \xi^A}, (\pa_k\varphi_B)\frac{\pa}{\pa \eta_B}\Bigr) \\
&=(\pa_i\pa_j f^A)(\pa_k\varphi_A).
\end{align*}
Thus, $(f, \varphi)$ is a statistical embedding in the sense of Lauritzen.

Next we prove (ii)$\Rightarrow$(i). Suppose that $(f, \varphi)\colon M\longrightarrow V\times V^*$ is a local statistical embedding. We will construct a local flat statistical structure $(G, D)$ on $V$ so that $f$ becomes a statistical embedding. Let $(f^A, \varphi_A)$ be the components with respect to linear coordinates $(\xi^A)$ of $V$ and its dual coordinates. We consider the 1-form $\omega:=\langle df, \varphi\rangle=(\pa_i f^A)\varphi_A dx^i$ on $M$. Since $\pa_i f^A\pa_j\varphi_A=g_{ij}$, we have 
\begin{align*}
d\omega=\bigl((\pa_j\pa_i f^A)\varphi_A+\pa_i f^A\pa_j \varphi_A\bigr)dx^j\wedge dx^i =0.
\end{align*}
Thus, there (locally) exists a function $\psi_0\in C^\infty(M)$ such that $d\psi_0=\omega$. If we regard $M$ as a submanifold of $V$ and consider $\varphi_Ad\xi^A$ as a section of $T^*V|_M$, it holds that $(\varphi_A d\xi^A)|_{TM}=d\psi_0$. Hence we can construct a local function $\psi\in C^\infty(V)$ which satisfies
\begin{equation}\label{psi-cond}
\psi|_M=\psi_0, \quad \frac{\pa \psi}{\pa \xi^A}\Big|_{M}=\varphi_A
\end{equation}
by suitably choosing the 1-jet of $\psi$ along $M$ in the transversal directions. Moreover, we can take such a $\psi$ so that the Hessian $(\pa^2\psi/\pa\xi^A\pa\xi^B)$ is (locally) positive definite as follows: Let $\rho_1, \dots, \rho_r\in C^\infty(V)\ (r=N-n)$ be local defining functions of $M$, i.e., locally $M=\{\rho_1=\cdots=\rho_r=0\}$, $d\rho_1\wedge\cdots \wedge d\rho_r\neq0$. We modify $\psi$ as 
\[
\psi'=\psi+C(\rho_1^2+\cdots+\rho_r^2)
\]
with a constant $C>0$. Then, $\psi'$ still satisfies \eqref{psi-cond}, and by differentiating the equation
\[
(\pa_j f^A)\varphi_A=\pa_j\psi_0=\frac{\pa \psi'}{\pa \xi^A}\pa_j f^A
\]
on $M$, we have 
\[
(\pa_i\pa_j f^A)\varphi_A+\pa_j f^A\pa_i\varphi_A=\frac{\pa^2\psi'}{\pa\xi^A\pa\xi^B}\pa_i f^A\pa_j f^B+\frac{\pa \psi'}{\pa \xi^A}\pa_i\pa_j f^A.
\]
Since $\pa_i f^A\pa_j\varphi_A=g_{ij}$ and $(\pa \psi'/\pa\xi^A)|_M=\varphi_A$, we obtain
\begin{equation}\label{Hesse-psi}
\frac{\pa^2\psi'}{\pa\xi^A\pa\xi^B}\pa_i f^A\pa_j f^B=g_{ij}.
\end{equation}
This implies that the Hessian of $\psi'$ is positive definite on $TM$, so 
we can make it positive definite on a neighborhood of $M$ in $V$ by taking a sufficiently large $C$.

Now we define a local flat statistical structure on $V$ by the metric 
\[
G:=\frac{\pa^2\psi'}{\pa\xi^A\pa\xi^B}d\xi^A\otimes d\xi^B
\]
and the flat connection $D$ associated with the coordinates $(\xi^A)$. The equation \eqref{Hesse-psi} implies $f^*G=g$. Moreover, we have
\begin{align*}
G(D_{f_*\pa_i}(f_*\pa_j), f_*\pa_k) 
&=\Bigl[d\xi^A\otimes d\Bigl(\frac{\pa\psi'}{\pa \xi^A}\Bigr)\Bigr]
\Bigl(\pa_i\pa_j f^A \frac{\pa}{\pa \xi^A}, f_*\pa_k\Bigr) \\
&=(\pa_i\pa_j f^A)(\pa_k\varphi_A) \\
&=g(\nabla_{\pa_i}\pa_j, \pa_k).
\end{align*}
Thus, $f\colon M\longrightarrow V$ is a statistical embedding.
\end{proof}
As an application of Theorem \ref{equivalence}, we will prove the following
\begin{prop}\label{alpha}
If a statistical manifold $(M, g, \nabla)$ is locally embeddable to an $N$-dimensional flat statistical manifold, then $(M, g, \nabla^{(\a)})$, $\a\in\R$, are locally embeddable to a flat statistical manifold of dimension $2N$. 
\end{prop}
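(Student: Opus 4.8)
The plan is to transport the problem to Lauritzen's picture, where the target $V\times V^*$ is a fixed product of dual vector spaces carrying no auxiliary potential, and then to build the $\a$-connection embedding by superposing the embeddings of $\nabla$ and of its dual $\nabla^*$. By Theorem \ref{equivalence}, the hypothesis gives, on a neighborhood of each point, a Lauritzen embedding $(f,\varphi)\colon M\to V\times V^*$ with $\dim V=N$, so that
\[
g_{ij}=(\pa_i f^A)(\pa_j\varphi_A),\qquad \Gamma_{ijk}=(\pa_i\pa_j f^A)(\pa_k\varphi_A),
\]
where $\Gamma_{ijk}=g_{kl}\Gamma_{ij}{}^l$ are the coefficients of $\nabla$. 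As observed just after the definition, the reversed pair $(\varphi,f)\colon M\to V^*\times V$ is a Lauritzen embedding of the dual statistical manifold $(M,g,\nabla^*)$, so that likewise
\[
g_{ij}=(\pa_i\varphi_A)(\pa_j f^A),\qquad \Gamma^*_{ijk}=(\pa_i\pa_j\varphi_A)(\pa_k f^A),
\]
with $\Gamma^*_{ijk}=g_{kl}\Gamma^*_{ij}{}^l$ the coefficients of $\nabla^*$.

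Next I would set $W:=V\times V^*$, a $2N$-dimensional vector space with $W^*=V^*\times V$, and define
\[
F:=(f,\varphi)\colon M\to W,\qquad
\Phi:=\Bigl(\tfrac{1+\a}{2}\,\varphi,\ \tfrac{1-\a}{2}\,f\Bigr)\colon M\to W^*.
\]
The claim is that $(F,\Phi)$ is a Lauritzen embedding of $(M,g,\nabla^{(\a)})$. For the metric equation, the natural pairing of $W$ and $W^*$ splits along the two blocks, and using the two metric identities above one gets
\[
(\pa_i F^K)(\pa_j\Phi_K)=\tfrac{1+\a}{2}(\pa_i f^A)(\pa_j\varphi_A)+\tfrac{1-\a}{2}(\pa_i\varphi_A)(\pa_j f^A)=\Bigl(\tfrac{1+\a}{2}+\tfrac{1-\a}{2}\Bigr)g_{ij}=g_{ij}.
\]
For the connection equation, the same block decomposition together with the identities for $\Gamma$ and $\Gamma^*$ gives
\[
(\pa_i\pa_j F^K)(\pa_k\Phi_K)=\tfrac{1+\a}{2}\,\Gamma_{ijk}+\tfrac{1-\a}{2}\,\Gamma^*_{ijk},
\]
which is exactly $g_{kl}\Gamma^{(\a)}_{ij}{}^l$ for $\nabla^{(\a)}=\tfrac{1+\a}{2}\nabla+\tfrac{1-\a}{2}\nabla^*$. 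Granting that $(F,\Phi)$ is an embedding, the implication (ii)$\Rightarrow$(i) of Theorem \ref{equivalence}, applied with $\dim W=2N$, then yields the local embedding of $(M,g,\nabla^{(\a)})$ into a flat statistical manifold of dimension $2N$, as desired.

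I expect the only genuinely delicate point to be the embedding property of $(F,\Phi)$, and I would dispose of it by noting that the first component $F=(f,\varphi)$ is literally the original Lauritzen embedding and hence an embedding onto its image in $W$; since the projection $W\times W^*\to W$ restricts to $F$ on $M$, the combined map $(F,\Phi)$ is automatically an embedding for every $\a$, with no special treatment needed at the degenerate values $\a=\pm1$. The remaining care is purely bookkeeping: one must pair the correct slot of $\Phi$ with the correct slot of $F$ so that the coefficient $\tfrac{1+\a}{2}$ multiplies $\nabla$ and $\tfrac{1-\a}{2}$ multiplies $\nabla^*$; reversing the two blocks of $\Phi$ would instead produce $\nabla^{(-\a)}$, which is why the placement of $\varphi$ and $f$ in the definition of $\Phi$ is essential.
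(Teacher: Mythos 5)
Your proof is correct and follows essentially the same route as the paper: pass to a Lauritzen embedding via Theorem \ref{equivalence}, form the pair into $(V\oplus V^*)\times(V^*\oplus V)$ by mixing $f$ and $\varphi$ with the weights $\tfrac{1\pm\a}{2}$, verify the two Lauritzen identities, and return via Theorem \ref{equivalence}. The only (immaterial) difference is that you place both scalar factors on $\Phi$ whereas the paper distributes one onto $F$; and note that $\Phi$ itself is an embedding for every $\a$ either because at least one of $\tfrac{1\pm\a}{2}$ is nonzero, or directly from the identity $g(X,Y)=\langle F_*X,\Phi_*Y\rangle$, which forces $\Phi_*$ to be injective.
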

\begin{proof}
By Theorem \ref{equivalence}, we have a local statistical embedding 
$(f, \varphi)\colon (M, g, \nabla)\longrightarrow V\times V^*$ for an $N$-dimensional vector space $V$. For each $\a\in\R$, we define a mapping 
\[
(F, \Phi)\colon M\longrightarrow (V\oplus V^*)\times (V^*\oplus V)
\]
by
\[
F:=\Bigl(f, \frac{1-\a}{2}\varphi\Bigr), \quad \Phi:=\Bigl(\frac{1+\a}{2}\varphi, f\Bigr).
\]
Then, $F$ and $\Phi$ are local embeddings and we have
\begin{align*}
\langle F_*X, \Phi_* Y\rangle&=\frac{1+\a}{2}g(X, Y)+\frac{1-\a}{2}g(Y, X)=g(X, Y), \\
\langle X\cdot F_* Y, \Phi_* Z\rangle&=\frac{1+\a}{2}g(\nabla_X Y, Z)+\frac{1-\a}{2}g(\nabla^*_X Y, Z)=g(\nabla^{(\a)}_X Y, Z).
\end{align*}
Hence $(F, \Phi)$ is a statistical embedding, and by Theorem \ref{equivalence}, $(M, g, \nabla^{(\a)})$ is locally embeddable to a flat statistical manifold of dimension $\dim(V\oplus V^*)=2N$.
\end{proof}

\subsection{Affine embeddings of statistical manifolds: proof of Theorem \ref{affine}}
We can also use Theorem \ref{equivalence} to prove Theorem \ref{affine}, which gives a relation between statistical embeddings and affine embeddings. 

Let $M$ be an $n$-dimensional $C^\infty$-manifold. First we consider the affine embedding of codimension $1$. Let $f\colon M\longrightarrow \R^{n+1}$ be an embedding, and $\xi\in\Gamma(f^*T\R^{n+1})$ a transverse vector field. Then, according to the decomposition $f^*T\R^{n+1}=f_*TM\oplus\R\xi$, we can write as
\begin{align*}
D_X(f_*Y)&=f_*(\nabla_X Y)-g(X, Y)\xi, \\
D_X \xi&=f_*(S(X))+\tau(X)\xi
\end{align*}
for $X, Y\in\Gamma(TM)$, where $D$ denotes the canonical flat connection on $f^*T\R^{n+1}$. One can see that $\nabla$ defines a torsion-free affine connection on $M$, and $g$ gives a symmetric $2$-tensor, called the {\it affine second fundamental form}.
The endomorphism $S$ is called the {\it affine shape operator}. These quantities satisfy the Gauss--Codazzi--Ricci equations derived from the flatness of $D$, and it follows that $(g, \nabla)$ gives a statistical structure on $M$ if $g$ is positive definite and $\tau=0$ (equiaffine condition). In this case, we say the statistical manifold $(M, g, \nabla)$ admits an affine embedding to $\R^{n+1}$. Dillen--Nomizu--Vrankan \cite{DNV} proved the affine Bonnet theorem, which states that if a statistical manifold has tensors with the Gauss--Codazzi--Ricci equations, then it admits
an affine embedding to $\R^{n+1}$. Moreover, they showed that the embeddability is also characterized by the condition that the statistical structure is {\it $1$-conformally flat}, or equivalently the dual connection $\nabla^*$ is projectively flat; see also \cite{Kurose}.

When a statistical manifold $M$ admits an affine embedding $\{f, \xi\}$ as above, it also has the {\it dual embedding} or the {\it conormal mapping} $\varphi\colon M\longrightarrow (\R^{n+1})^*$ defined by
\[
\varphi(x)|_{f_*T_xM}=0, \quad \langle \xi_x, \varphi(x)\rangle=1.
\]
Since $\tau=0$, $\varphi$ satisfies
\begin{equation}\label{xi-phi}
\langle \xi, \varphi_* Z\rangle=Z\cdot\langle\xi, \varphi\rangle-\langle D_Z\xi, \varphi\rangle=0\quad  {\rm for}\ Z\in\Gamma(TM).
\end{equation}
Using this, we have
\begin{equation}\label{affine-Lauritzen}
\begin{aligned}
\langle f_*X, \varphi_*Y\rangle&=Y\cdot \langle f_*X, \varphi \rangle-\langle D_Y (f_*X), \varphi\rangle=g(X, Y), \\
\langle X\cdot f_*Y, \varphi_*Z\rangle&=\langle f_*(\nabla_X Y), \varphi_*Z\rangle
=g(\nabla_X Y, Z).
\end{aligned}
\end{equation}
Note that by the first equation, $\varphi_*$ is injective so $\varphi$ is a local embedding. These two equations imply that the pair $(f, \varphi)\colon M\longrightarrow \R^{n+1}\times (\R^{n+1})^*$ is a statistical embedding in the sense of Lauritzen. This proves Theorem \ref{affine} in the case of codimension $1$.  

Next we consider affine embeddings of codimension $2$. Let $f\colon M\longrightarrow \R^{n+2}$ be an embedding of an $n$-dimensional $C^\infty$-manifold. We assume that the position vector field $\eta=x^A(\pa/\pa x^A)\in \Gamma(f^*T\R^{n+2})$ is transverse to $f_*TM$. Supplying one more transverse vector field $\xi\in\Gamma(f^*T\R^{n+2})$, we consider the decomposition 
$f^*T\R^{n+2}=f_*TM\oplus\R\xi\oplus\R\eta$. For $X, Y\in\Gamma(TM)$, we write as 
\begin{align*}
D_X (f_*Y)&=f_*(\nabla_X Y)-g(X, Y)\xi-k(X, Y)\eta, \\
D_X \xi&=f_*(S(X))+\tau(X)\xi+\mu(X)\eta, \\
D_X\eta&=X.
\end{align*}
Again, $(g, \nabla)$ defines a statistical structure on $M$ if $g$ is positive definite and $\tau=0$, and we say that the statistical manifold $(M, g, \nabla)$ admits an affine embedding of codimension $2$. Matsuzoe \cite{Matsuzoe} proved that the affine embeddability of codimension $2$ is characterized by the condition that $(g, \nabla)$ is {\it conformally-projectively flat}, which is a generalization of the $1$-conformally flatness.

For an affine embedding $f$ of codimension $2$, we can also define the dual embedding $\varphi\colon M\longrightarrow (\R^{n+2})^*$ by
\[
\varphi(x)|_{f_*T_xM\oplus\R\eta_x}=0, \quad \langle \xi_x, \varphi(x)\rangle=1.
\]
Then, we again have the equation \eqref{xi-phi} and 
\[
\langle \eta, \varphi_* Z\rangle=Z\cdot\langle\eta, \varphi\rangle-\langle D_Z\eta, \varphi\rangle=0\quad  {\rm for}\ Z\in\Gamma(TM).
\]
Consequently, we obtain \eqref{affine-Lauritzen} as well, and $(f, \varphi)\colon M\longrightarrow \R^{n+2}\times (\R^{n+2})^*$ gives a statistical embedding in the sense of Lauritzen. Thus we have proved Theorem \ref{affine} in the case of codimension $2$.


\section{The Bonnet theorem}\label{Bonnet-theorem}
\subsection{The Gauss--Codazzi--Ricci equations for statistical embeddings}
Let $(M, g, \nabla)$ be an $n$-dimensional statistical manifold and $(\mathcal{S}, G, D)$ an $N$-dimensional flat statistical manifold. For a statistical embedding $f\colon M\longrightarrow \mathcal{S}$, we will define extrinsic curvature quantities and differential equations satisfied by them. 

We set $r:=N-n$ and let $(\nu_a)=(\nu_1, \dots, \nu_r)$ be a local orthonormal frame for $(f_*TM)^\perp\subset f^*T\mathcal{S}$ with respect to $G$. Then, locally we have the orthogonal decomposition
\[
f^*T\mathcal{S}=f_*TM\oplus \underline\R^r,
\]
where $\underline\R^r:=M\times \R^r$ is the trivial bundle defined by $(\nu_a)$. In calculations with the index notation, it is convenient to identify $\R^r$ with its dual $(\R^r)^*$ and regard $(\nu_a)$ as a frame of $(\underline\R^r)^*$ as well as $\underline\R^r$; thus we also write $\nu^a$ for $\nu_a$ under this identification.   

For vector fields $X, Y\in\Gamma(TM)$, we can write as
\begin{align*}
D_X Y&=f_*(\nabla_X Y)-h^a(X, Y)\nu_a, \\
D_X \nu_a&=f_*(S_a(X))+\tau^b_a(X)\nu_b
\end{align*}
and
\begin{align*}
D^*_X Y&=f_*(\nabla^*_X Y)-h^*_a(X, Y)\nu^a, \\
D^*_X \nu^a&=f_*(S^{*a}(X))+\tau^{*a}_b(X)\nu^b.
\end{align*}
The $\R^r$-valued or $(\R^r)^*$-valued symmetric $2$-tensors $h^a_{ij}, h^*_{aij}$ are called the {\it second fundamental forms} or the {\it embedding curvatures} in the literature \cite{Amari, Vos}. The {\it shape operator} $S_a$ satisfies
\[
g(S_a(X), Y)=G(D_X \nu_a, Y)=-G(\nu_a, D^*_X Y)=h^*_a(X, Y)
\]
and the similar equation holds for $S^{*a}$. Hence we have
\[
S_{ai}{}^j=h^*_{ai}{}^j, \quad S^{*a}{}_{i}{}^j=h^a{}_i{}^j.
\]
We also have
\[
\tau^{*a}_b(X)=G(D^*_X\nu^a, \nu_b)=-G(\nu^a, D_X\nu_b)=-\tau^a_b(X).
\]
Thus, the extrinsic tensors for the statistical embedding $f$ are given by
\[
h^a_{ij}, \quad h^*_{aij}, \quad \tau^a_{bi}.
\]

We will derive the Gauss--Codazzi--Ricci equations for these tensors from the flatness of $D$. Let $\boldsymbol{\nabla}:=f^*D$, $\boldsymbol{\nabla}^*:=f^*D^*$ be 
the flat connections on the vector bundle 
\[
E:=TM\oplus\underline\R^r\cong f^*T\mathcal{S}
\]
induced by $D, D^*$.
Then, they can be expressed as
\begin{equation}\label{conn-E}
\begin{aligned}
\boldsymbol{\nabla}_i
\begin{pmatrix}
X^j \\
\lambda^a
\end{pmatrix}
&=\begin{pmatrix}
\nabla_iX^j+h^*_{bi}{}^j\lambda^b \\
\nabla_i\lambda^a+\tau^a_{bi}\lambda^b-h^a_{ik}X^k
\end{pmatrix}, \\
\boldsymbol{\nabla}^*_i
\begin{pmatrix}
X^j \\
\lambda_a
\end{pmatrix}
&=\begin{pmatrix}
\nabla^*_iX^j+h^b{}_i{}^j\lambda_b \\
\nabla^*_i\lambda_a-\tau^b_{ai}\lambda_b-h^*_{aik}X^k
\end{pmatrix},
\end{aligned}
\end{equation}
where $\nabla_i\lambda^a, \nabla^*_i\lambda_a$ denote the canonical flat connections on $\underline\R^r$ or $(\underline\R^r)^*$; recall that we are identifying these two bundles. The vector bundle $E$ possesses the fiber metric
\begin{equation}\label{metric-E}
G\Bigl(\begin{pmatrix}
X^i \\
\lambda^a
\end{pmatrix}, \begin{pmatrix}
Y^j \\
\mu^b
\end{pmatrix}\Bigr)=g_{ij}X^iY^j+\lambda^a\mu_a,
\end{equation}
and $\boldsymbol{\nabla}, \boldsymbol{\nabla}^*$ are dual to each other with respect to this metric. By this duality, the curvature tensors $\boldsymbol{R}_{ij}{}^K{}_L, \boldsymbol{R}^*_{ij}{}^K{}_L$ of $\boldsymbol{\nabla}, \boldsymbol{\nabla}^*$ are related by
\[
\boldsymbol{R}_{ijKL}=-\boldsymbol{R}^*_{ijLK},
\]
where the index is lowered by the metric $G_{AB}$. Thus, it suffices to consider only the equation $\boldsymbol{R}_{ij}{}^K{}_L=0$. Since $\nabla$ is torsion-free, this equation is equivalent to
\[
(\boldsymbol{\nabla}_i\boldsymbol{\nabla}_j-\boldsymbol{\nabla}_j\boldsymbol{\nabla}_i)
\begin{pmatrix}
X^k \\
\lambda^a
\end{pmatrix}=0,
\]
where the covariant differentiation on $TM\otimes E$ is with respect to the coupled connection $\nabla\otimes \boldsymbol{\nabla}$. By the computation using \eqref{conn-E}, we have
\[
(\boldsymbol{\nabla}_i\boldsymbol{\nabla}_j-\boldsymbol{\nabla}_j\boldsymbol{\nabla}_i)
\begin{pmatrix}
X^k \\
\lambda^a
\end{pmatrix}=
\begin{pmatrix}
{\rm (i)} \\
{\rm (ii)}
\end{pmatrix},
\]
where
\begin{align*}
{\rm (i)}&=\bigl(R_{ij}{}^k{}_l-(h^*_{ai}{}^kh^a_{jl}-h^*_{aj}{}^kh^a_{il})\bigr)X^l 
+(\nabla_ih^*_{aj}{}^k-\nabla_jh^*_{ai}{}^k+h^*_{bi}{}^k\tau^b_{aj}-h^*_{bj}{}^k\tau^b_{ai})\lambda^a, \\
{\rm (ii)}&=-(\nabla_i h^a_{jl}-\nabla_j h^a_{il}+\tau^a_{bi}h^b_{jl}-\tau^a_{bj}h^b_{il})X^l \\
&\quad +(\nabla_i\tau^a_{bj}-\nabla_j\tau^a_{bi}+\tau^a_{ci}\tau^c_{aj}-\tau^a_{cj}\tau^c_{ai}-h^a_{il}h^*_{bj}{}^l+h^a_{jl}h^*_{bi}{}^l)\lambda^b.
\end{align*}
Hence the flatness of $\boldsymbol{\nabla}$ (and $\boldsymbol{\nabla}^*$) is equivalent to the following Gauss--Codazzi--Ricci equations:

\begin{equation}\label{GCR}
\begin{aligned}
&{\rm Gauss:} & &R_{ijkl}=h^*_{aik}h^a_{jl}-h^*_{ajk}h^a_{il}, \\
&{\rm Codazzi:} & &\nabla_{[i} h^a_{j]l}-h^b_{k[i}\tau^a_{bj]}=0, \quad 
\nabla_{[i}h^*_{aj]}{}^k+h^*_{b[i}{}^k\tau^b_{aj]}=0, \\
&{\rm Ricci:} & &\nabla_{[i}\tau^a_{bj]}+\tau^a_{c[i}\tau^c_{bj]}-h^a_{l[i}h^*_{bj]}{}^l=0.
\end{aligned}
\end{equation}
Note that the second Codazzi equation can also be written as
\[
\nabla^*_{[i}h^*_{aj]k}+h^*_{bk[i}\tau^b_{aj]}=0.
\]
\subsection{The statistical Bonnet theorem: proof of Theorem \ref{Bonnet}}
We will prove the Bonnet theorem for statistical manifold (Theorem \ref{Bonnet}). Suppose that an $n$-dimensional statistical manifold $(M, g, \nabla)$ admits tensors 
$h^a_{ij}, \ h^*_{aij}, \ \tau^a_{bi}$ satisfying the equations \eqref{GCR}. By Theorem \ref{equivalence}, it suffices to construct a local statistical embedding $f\colon M \longrightarrow \R^{n+r}\times(\R^{n+r})^*$ in the sense of Lauritzen.

We consider the vector bundle $E:=TM\oplus\underline\R^r$ and define linear connections $\boldsymbol{\nabla}, \boldsymbol{\nabla}^*$ on $E$ by \eqref{conn-E}.
By a direct computation, we see that these are dual to each other with respect to the fiber metric $G$ defined by \eqref{metric-E}. Moreover, the Gausss--Codazzi--Ricci equations \eqref{GCR} imply that both connections are flat. Hence we can take local frames $(e_A), (e^*_A)$ of $E$ which are parallel with respect to these connections: $\boldsymbol{\nabla}e_A=\boldsymbol{\nabla}^*e^*_A=0$. Since $G(e_A, e^*_B)$ is constant, we can choose $(e_A), (e^*_A)$ so that 
\begin{equation}\label{G}
G(e_A, e^*_B)=\d_{AB}.
\end{equation}
Let $F\colon E\longrightarrow M\times\R^{n+r}$ be the local trivialization by the frame $(e_A)$ and define a local $\R^{n+r}$-valued 1-form $\theta$ on $M$
by
\[
\theta(X):=F\Bigl(\begin{pmatrix}
X \\
0
\end{pmatrix}
\Bigr), \quad X\in\Gamma(TM).
\]
We will show that $\theta$ is a closed form. For $X, Y\in\Gamma(TM)$, we have
\[
F\Bigl(\begin{pmatrix}
\nabla_X Y \\
-h(X, Y)
\end{pmatrix}\Bigr)=
F\Bigl(\boldsymbol{\nabla}_X \begin{pmatrix}
Y \\
0
\end{pmatrix}\Bigr)=X\cdot\theta(Y).
\]
Since $h$ is symmetric, we have
\[
F\Bigl(\begin{pmatrix}
\nabla_X Y-\nabla_Y X \\
0
\end{pmatrix}\Bigr)=X\cdot\theta(Y)-Y\cdot\theta(X).
\]
It then follows that 
\begin{align*}
d\theta(X, Y)&=X\cdot\theta(Y)-Y\cdot\theta(X)-\theta([X, Y]) \\
&=\theta(\nabla_X Y-\nabla_Y X-[X, Y]) \\
&=0
\end{align*}
as $\nabla$ is torsion-free. Thus, $\theta$ is closed and locally we can write as $\theta=df$ with a mapping $f\colon M\longrightarrow \R^{n+r}$. Similarly, by using the local trivialization by the frame $(e^*_A)$, we define $\theta^*=d\varphi,\ \varphi\colon M\longrightarrow (\R^{n+r})^*$; in this case we regard the fiber as the dual space of $\R^{n+r}$. Then, from \eqref{G} we have
\[
\langle f_*X, \varphi_*Y\rangle
=\langle \theta(X), \theta^*(Y)\rangle
=G\Bigl(\begin{pmatrix}
X \\
0
\end{pmatrix}, \begin{pmatrix}
Y \\
0
\end{pmatrix}
\Bigr)
=g(X, Y).
\]
for $X, Y\in\Gamma(TM)$. We also have
\begin{align*}
\langle X\cdot f_*Y, \varphi_* Z\rangle&=\langle X\cdot\theta(Y), \theta^*(Z)\rangle \\
&=G\Bigl(\boldsymbol{\nabla}_X\begin{pmatrix}
Y \\
0
\end{pmatrix}, \begin{pmatrix}
Z \\
0
\end{pmatrix}
\Bigr)
\\
&=G\Bigl(\begin{pmatrix}
\nabla_X Y \\
-h(X, Y)
\end{pmatrix}, \begin{pmatrix}
Z \\
0
\end{pmatrix}
\Bigr) \\
&=g(\nabla_X Y, Z)
\end{align*}
for $X, Y, Z\in\Gamma(TM)$. Thus, $(f, \varphi)\colon M\longrightarrow \R^{n+r}\times (\R^{n+r})^*$ is a local statistical embedding in the sense of Lauritzen, and we complete the proof of Theorem \ref{Bonnet}.

\end{document}